\newtheorem{theorem}{Theorem}
\newtheorem{corollary}[theorem]{Corollary}
\newtheorem{definition}{Definition}
\newtheorem{lemma}{Lemma}
\newtheorem{remark}[theorem]{Remark}
\newenvironment{proof}[1][Proof]{\noindent\textbf{#1.} }{\ \rule{0.5em}{0.5em}}
\title{Cayley Graphs on Billiard Surfaces, and Their Genus}
\author{
  Joanna Grzegrzolka\\
  Lee University\\
  \texttt{jgrzeg00@leeu.edu} 
  \and
    Jaime Lynne McCartney\\
  Dalton State College\\
  \texttt{jmccart1@daltonstate.edu}
  \and
  Jason Schmurr\\
  Lee University\\
  \texttt{jschmurr@leeuniversity.edu}
}
\begin{document}
\maketitle
\begin{abstract}
In this article we discuss a connection between two well-known constructions in mathematics: Cayley graphs and rational billiard surfaces. We describe a natural way to draw a Cayley graph of a dihedral group on each rational billiard surface. Both of these objects have the concept of ``genus'' attached to them. For the Cayley graph, the genus is defined to be the lowest genus amongst all surfaces that the graph can be drawn on without edge crossings. We prove that the genus of a Cayley graph associated with a triangular billiard table is always zero or one (Theorem \ref{main_theorem}). One reason this is interesting is that there exist triangular billiard surfaces of arbitrarily high genus, so the genus of the associated graph is often much lower than the genus of the billiard surface.

\end{abstract}
\section{Rational Billiard Surfaces}
The \emph{rational polygonal billiard surface} is a famous construction in topological dynamics. See \cite{Masur06rationalbilliards} for an excellent survey. Although billiard surfaces have served both as motivation for and examples of recent advances in sophisticated mathematics such as the work of Mirzhakani on moduli spaces, they have an intuitive construction. When following the path of a point mass bouncing around inside a polygon, when the point strikes a wall of the polygon we continue its path in a straight line in a reflected copy of the polygonal table. See Figure \ref{fig:433_process}.

\begin{figure}[h]
    \centering
    \includegraphics[scale=.15]{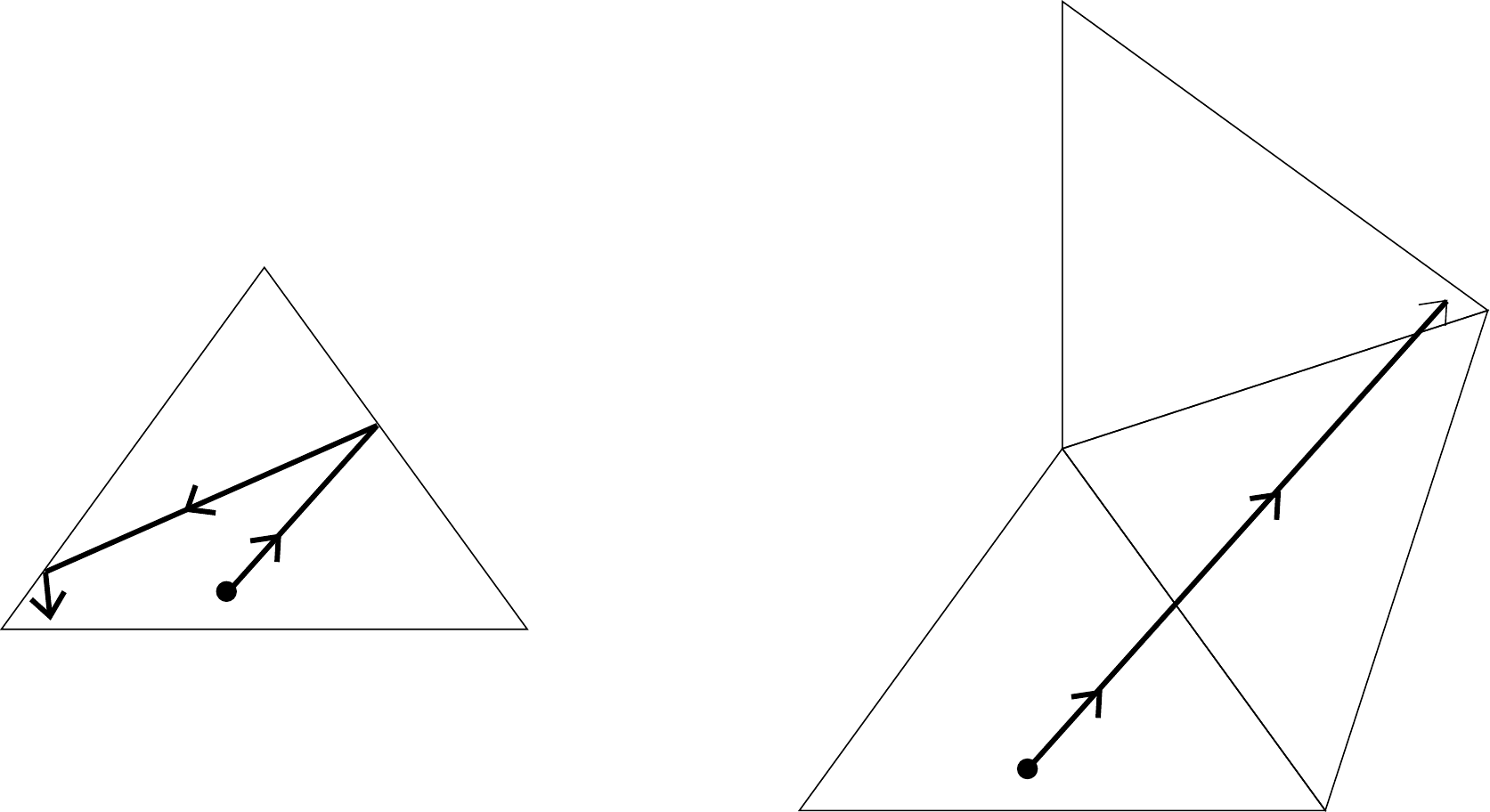}
    \caption{Unfolding a billiard path in a triangle with angles $\dfrac{3\pi}{10}$, $\dfrac{3\pi}{10}$, $\dfrac{4\pi}{10}$.}
    \label{fig:433_process}
\end{figure}

Copies of the table that correspond to identical directions of the billiard ball are identified. This is known as ``unfolding'' the path. In this way, a billiard path can be viewed as a straight line on a compact surface instead of as a collection of line segments of various slopes.

Here is a more rigorous description of a polygonal billiards surface. Begin with a polygonal region $R$ in the plane whose internal angles are rational multiples of $\pi$. Consider the set of reflections across the lines through the $m$ sides of $R$. Let $r_1,\ldots,r_m$ be the derivatives of these reflections -- such derivatives are reflections across lines through the origin. These derivatives generate a group $\Gamma$ under composition that consists of reflections across the origin (compositions of odd numbers of reflections) and rotations about the origin (compositions of even numbers of reflections). Suppose that the internal angles of $R$ are $\alpha_1,\ldots,\alpha_n$, where $\alpha_i=\dfrac{p_i\pi}{n}$ and $n,p_1,\ldots,p_m \in \mathbb{Z}$. Further, suppose that $n$ is the ``least common denominator'' in the sense that $\gcd(n,p_1,\ldots,p_m)=1$. It can be shown that in fact $\Gamma$ is $D_n$, the dihedral group of order $2n$\,.

Next, we consider the set $S=\{\sigma R :\sigma\in\Gamma\}$ of the $2n$ copies of $R$ transformed by elements of $\Gamma$.  We construct the rational billiards surface for $R$ by ``gluing'' two copies together along a side if one copy can be seen as a mirror image of the other, reflected across that side. Specifically, for each $\sigma_j\in\Gamma$ and each $r_i$, we glue together $\sigma_j r_i R$ and $\sigma_j R$ along the ``shared side''. This is because, if $e_i$ is the side of $R$ corresponding to $r_i$ then reflection across $\sigma(e_i)$ is equivalent to $\sigma r_i\sigma^{-1}$, and so the reflection across $\sigma(e_i)$ of $\sigma(R)$ is $\sigma r_i\sigma^{-1}(\sigma R)=\sigma r_i R$.

The result is a closed flat surface known as a \emph{translation surface} since, viewed as complex manifold with flat structure, its change-of-coordinate maps (away from a finite number of singular points located at the corners of the copies of $R$) are Euclidean translations.

In this paper we will focus on billiard surfaces arising from rational triangles. Let $T(p_1,p_2,p_3)$ denote a rational triangle with internal angles $\alpha_i=\dfrac{p_i\pi}{n}$, where $n=p_1+p_2+p_3$ and $\gcd(p_1,p_2,p_3)=1$. Let $X(p_1,p_2,p_3)$ denote the billiard surface arising from billiards in $T(p_1,p_2,p_3)$. See Figure \ref{fig:X334} for a diagram of $X(3,3,4)$. The numerical labels indicate side identifications. This construction has been described as far back as 1936 by Fox and Kershner in \cite{fox_kershner}.

\begin{figure}[!htb]
    \centering
    \begin{minipage}{.5\textwidth}
    \centering
    \includegraphics[scale=.15]{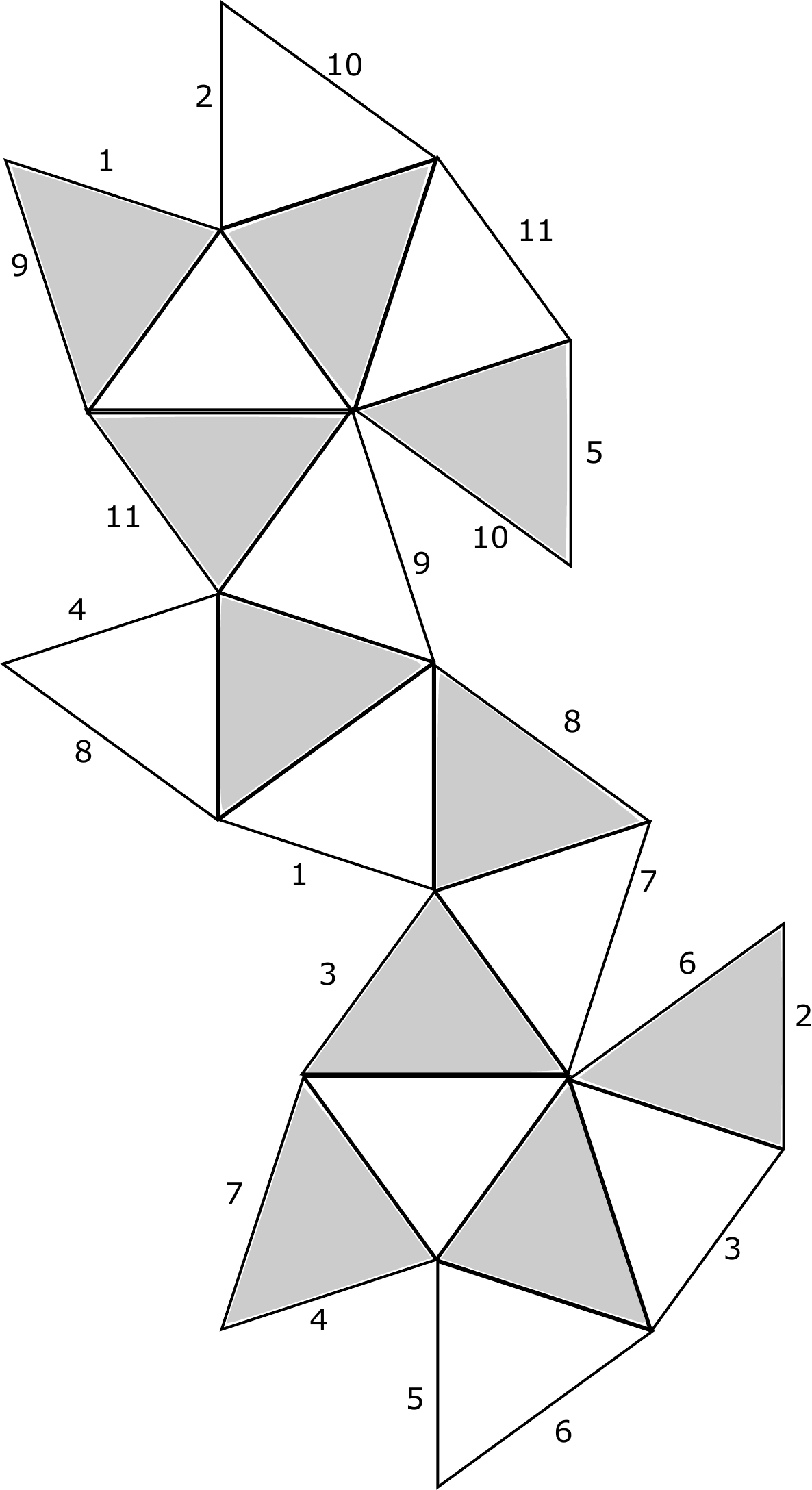}
    \caption{The billiard surface $X(3,3,4)$. }
    \label{fig:X334}

    \end{minipage}%
    \begin{minipage}{0.5\textwidth}
        \centering
    \includegraphics[scale=.18]{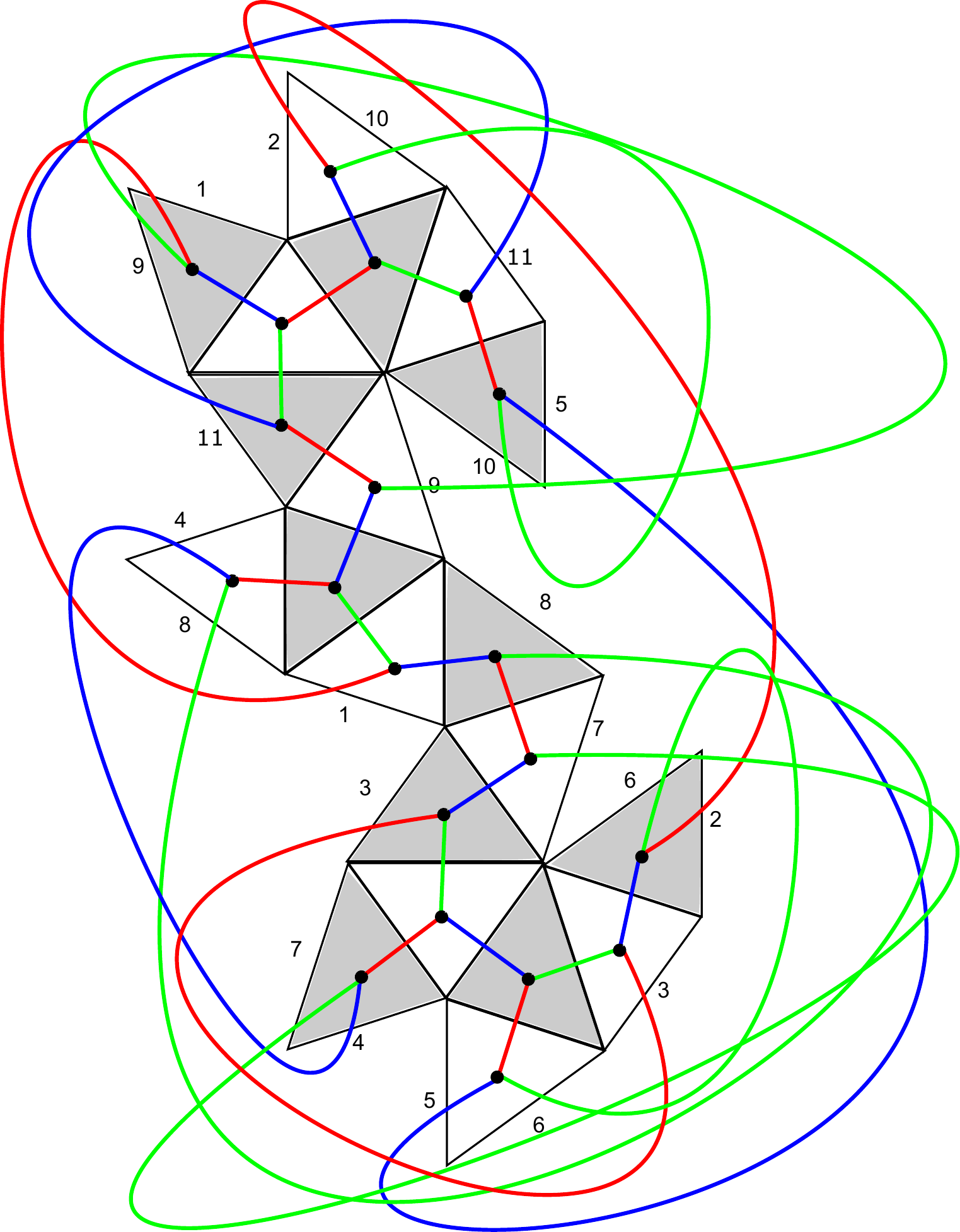}
    \caption{The graph $G(3,3,4)$ drawn on $X(3,3,4)$.}
    \label{fig:Gamma334}
    \end{minipage}
\end{figure}
\section{Cayley Graphs on Billiard Surfaces}

\subsection{Drawing the Graph}
Next, we will make a connection between billiard surfaces and graphs. We draw a graph $G(p_1,p_2,p_3)$ on a triangular billiard surface $X(p_1,p_2,p_3)$ by drawing a vertex in the center of each triangle and connecting two vertices with an edge if and only if the vertices lie inside triangles which share a side. See Figure \ref{fig:Gamma334}.
Because each copy of $T(p_1,p_2,p_3)$ in $X(p_1,p_2,p_3)$ is associated with a unique element of $D_n$, we can identify the vertices of the graph with elements of $D_n$. We choose one initial copy of $T(p_1,p_2,p_3)$ and denote it $eT$, where $e$ denotes the identity element of $D_n$. Let $a$, $b$, and $c$ denote reflections across lines parallel to sides opposite $\alpha_1$, $\alpha_2$, and $\alpha_3$, respectively. See Figure \ref{fig:reflections}. Then the triangle $eT$ is adjacent to triangles $aT$, $bT$, and $cT$, so we have edges connecting the vertex $e$ to the vertices $a$, $b$, and $c$. Observe that if $\sigma \in D_n$ then the reflection of $\sigma T$ across the side of $\sigma T$ opposite the $\alpha_1$ angle of $\sigma T$ is $\sigma a \sigma^{-1}(\sigma T)=\sigma a T$. This works the same way for the other two sides of the triangle. Hence, in $G(p_1,p_2,p_3)$, the vertex $\sigma$ is connected by edges to $\sigma a $, $\sigma b $, and $\sigma c $. 


In fact this graph is an example of a \emph{Cayley graph}. We shall review the necessary background for Cayley graphs in the next section.

\subsection{Notation and Basic Formulas}

\begin{figure}[h]
    \centering
    \includegraphics[scale=.15]{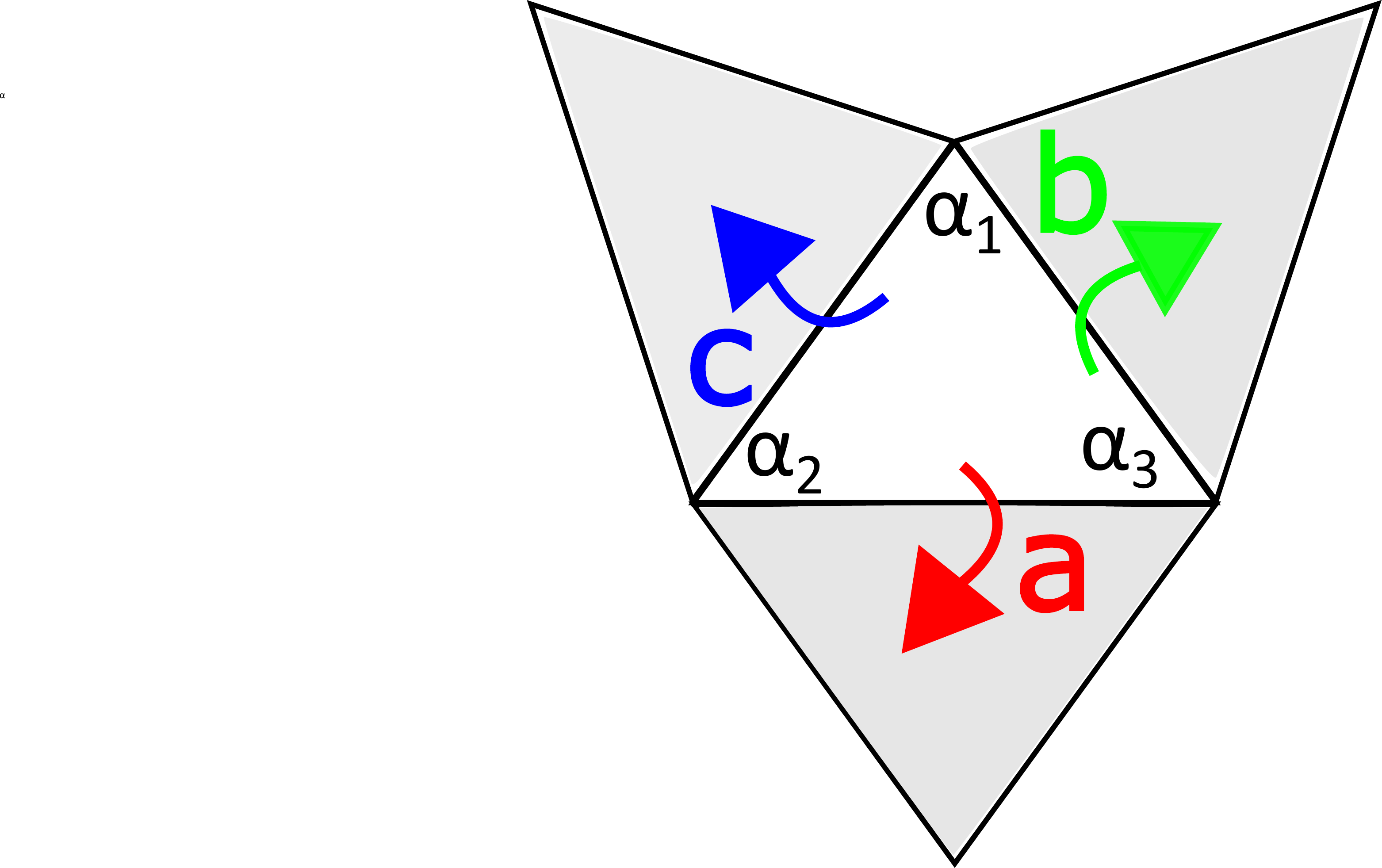}
    \caption{The reflections that generate $Cay(\{a,b,c\}, D_{n})$. }
    \label{fig:reflections}
\end{figure}

We write $Rot(\theta)$ to denote Euclidean rotation about the origin by $\theta$ and $Ref(\theta)$ to denote reflection across the line through the origin that makes an angle of $\theta$ with the positive horizontal axis. Then the following formulas hold:

$$Ref(\theta_1)Ref(\theta_2)=Rot\left(2[\theta_1-\theta_2]\right)$$
$$Ref(\theta_1)Rot(\theta_2)=Ref\left(\theta_1-\dfrac{1}{2}\theta_2\right)$$
$$Rot(\theta_1)Ref(\theta_2)=Ref\left(\theta_2+\dfrac{1}{2}\theta_1\right)$$

\begin{remark}
Suppose that our triangle is as in Figure \ref{fig:reflections}, oriented so that the base of the triangle is parallel to the horizontal axis. It follows from the elementary properties of Euclidean reflections and Euclidean rotations listed above that:

\begin{enumerate}
    \item $ab$ is rotation by $2\alpha_3$
    \item $ac$ is rotation by $-2\alpha_2$
    \item $aba$ is reflection across a line making angle $\alpha_3$ with the positive horizontal axis. 
\end{enumerate}
\end{remark}

\begin{remark}
There is a correspondence between closed billiard paths on $T(p_1,p_2,p_3)$, cylinders on $X(p_1,p_2,p_3)$ and circuits in $G(p_1,p_2,p_3)$. Specifically, every infinite family of parallel equal-length closed billiard paths in $T(p_1,p_2,p_3)$ unfolds to a cylinder on $X(p_1,p_2,p_3)$. The sequence of sides of $T$ struck by each path in this family determines a sequence of reflections in $\Gamma(p_1,p_2,p_3)$, which form a closed circuit in $G(p_1,p_2,p_3)$.
\end{remark}

For example, every right triangle has an infinite family of billiard paths consisting of striking four walls each; this corresponds to the closed circuit $abcb$ in the corresponding Cayley graph (see Lemma \ref{lemma:isosceles_four_word}). Similarly, the length six circuit identified in Theorem \ref{main_theorem} corresponds to the family of doubles of the well-known ``Fagnano orbit'' that exists as a closed billiard path in any acute triangle.

See Figure \ref{fig:X334} and Figure \ref{fig:Gamma334} for an example of a billiard surface and its accompanying Cayley graph. Although the genus of $X(3,3,4)$ is four, we shall show that the genus of $G(3,3,4)$ is zero.

\section{Graph Theory Background}
\subsection{Cayley Graphs}
Let $\Gamma$ be group. We say that a subset $S=\{g_1,g_2,...,g_n\}$ of $\Gamma$ is a \emph{generating set} for $\Gamma$ if every element of $\Gamma$ can be expressed as a product of elements of $S$ (and their inverses). We do not require $S$ to be minimal -- that is, we do not exclude the possibility that a proper subset of $S$ may also be a generating set of $\Gamma$. The \emph{Cayley graph} $Cay(S, \Gamma)$ of a group $\Gamma$ with generating set $\{g_1,g_2,\ldots,g_n\}$ is a graph whose vertices are the elements of $\Gamma$, and whose edges represent multiplication by an element of the generating set. We draw an edge from $x$ to $y$ if $y=xg_i$ for some generator $g_i$. For example, in Figure \ref{fig:K33} we see a drawing of the Cayley graph $Cay(\{a,b,c\},D_3)$, where $D_n$ is the dihedral group with $2n$ elements, and $\{a,b,c\}$ is the set of the three reflection elements in $D_3$. In graph theory, this graph is known as the \emph{complete bipartite graph} $K_{3,3}$. A graph is \emph{bipartite} if its vertex set can be partitioned into two subsets $V_1$ and $V_2$ such that each element of $V_1$ is adjacent only to elements of $V_2$. Note that in general a Cayley graph is a directed graph. However, because the generating elements we use in this paper all have order 2, we replace the pairs of oppositely directed edges with single undirected edges to form an undirected graph.  

\begin{figure}[h]
    \centering
    \includegraphics[scale=.3]{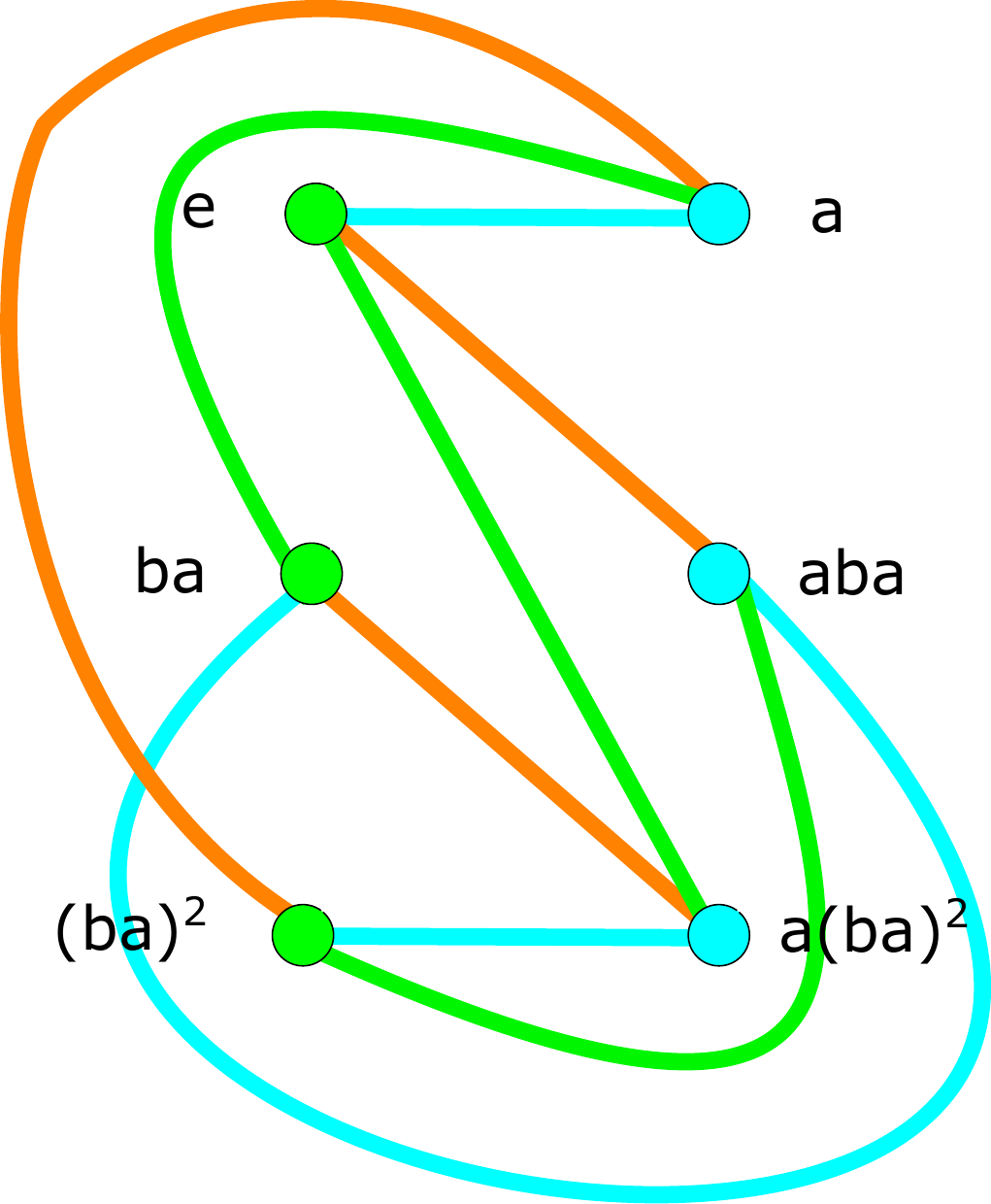}
    \caption{The Cayley graph for the (1,1,1) triangle is isomorphic to $K_{3,3}$.}
    \label{fig:K33}
\end{figure}
\subsection{Graph Genus and Graph Rotations}
\begin{definition}
The \emph{genus} of a graph is the smallest non-negative integer $g$ such that the graph can be drawn on a surface of genus $g$. 
\end{definition}

A graph of genus zero is called \emph{planar}. A classic theorem which can aid in computing graph genus is Kuratowksi's Theorem, which uses the concept of a \emph{subdivision} of a graph. A subdivision of a graph $G$ is a graph obtained from $G$ by adding vertices of order 2 to some of the edges of $G$. See Figure \ref{fig:K33subdivision} for an example of a graph which is a subdivision of $K_{3,3}$.

\begin{theorem}
(Kuratowski) A graph $G$ is not planar if and only if $G$ contains a subgraph that is a subdivision of either the complete graph $K_5$ or of the complete bipartite graph $K_{3,3}$.
\end{theorem}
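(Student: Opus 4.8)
The plan is to prove the two implications separately, with the bulk of the work going into the harder direction. The reverse implication --- if $G$ contains a subdivision of $K_5$ or $K_{3,3}$ then $G$ is non-planar --- is the routine one, and I would dispatch it first. I would begin by showing that $K_5$ and $K_{3,3}$ are themselves non-planar, using Euler's formula $V - E + F = 2$ for a connected plane graph together with the edge--face counting bounds $E \le 3V - 6$ (for simple graphs) and $E \le 2V - 4$ (for triangle-free graphs). Since $K_5$ has $V = 5$ and $E = 10 > 9 = 3V - 6$, and $K_{3,3}$ is triangle-free with $V=6$ and $E = 9 > 8 = 2V - 4$, neither can be drawn in the plane. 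The proof is then completed by two easy observations: planarity is inherited by subgraphs, and a subdivision of a graph $H$ is planar if and only if $H$ is planar, since suppressing degree-two vertices does not affect embeddability. Hence a graph containing a Kuratowski subdivision contains a non-planar subgraph and is itself non-planar.

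The forward implication --- if $G$ is non-planar then it contains such a subdivision --- is the hard part, and I would prove the contrapositive: every graph with no subdivision of $K_5$ or $K_{3,3}$ is planar. My approach would be induction on the number of edges, after first reducing to the case where $G$ is $3$-connected. The reduction is handled by the standard block-decomposition argument: a graph is planar if and only if each of its blocks is planar, so I may assume $G$ is $2$-connected; and if $G$ has a $2$-cut $\{x,y\}$, I would split $G$ along $\{x,y\}$ into smaller pieces (adding a virtual edge $xy$ to each), apply the inductive hypothesis to embed each piece, and glue the embeddings along the $2$-cut. One must check that a Kuratowski subdivision cannot be manufactured by the added virtual edges, which follows because any such subdivision passing through a virtual edge could be rerouted through a path in the opposite piece.

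For the core $3$-connected case I would follow the contraction approach. The key structural lemma is that every $3$-connected graph on at least five vertices has an edge $e$ whose contraction $G/e$ remains $3$-connected; since $G/e$ inherits the property of having no Kuratowski subdivision, the inductive hypothesis gives $G/e$ a planar embedding. I would then examine the face structure of this embedding around the contracted vertex and \emph{split} that vertex back into the two endpoints of $e$, distributing their incident edges consistently with the rotation system so that planarity is preserved. The claim is that such a split is always available unless the local arrangement of neighbours forces a $K_5$ or $K_{3,3}$ subdivision, which by hypothesis is absent.

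The main obstacle I anticipate is exactly this final splitting step: verifying that whenever the neighbours of the contracted vertex are arranged so that no valid split exists, one can read off an honest $K_5$ or $K_{3,3}$ subdivision in $G$. A secondary technical point is the gap between \emph{topological minors} (subdivisions) and \emph{minors}: it is cleanest to first prove the minor version of the theorem and then argue that a graph has a $K_5$ or $K_{3,3}$ minor precisely when it has a $K_5$ or $K_{3,3}$ subdivision. This equivalence is immediate for $K_{3,3}$ because it has maximum degree three, but for $K_5$ (maximum degree four) it requires a short case analysis showing that a $K_5$ minor without a $K_5$ subdivision always yields a $K_{3,3}$ subdivision instead.
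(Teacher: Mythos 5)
The first thing to note is that the paper does not prove this statement at all: Kuratowski's theorem is quoted as classical background, with attribution, and is used later only as a tool (the authors certify non-planarity of the Cayley graphs of isosceles and right triangles by exhibiting explicit $K_{3,3}$ subdivisions inside them). So there is no proof of the paper's own to compare yours against; the only meaningful comparison is with the standard proofs in the literature.

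Judged on its own merits, your plan is sound and is essentially the standard modern argument. The easy direction is complete as written: Euler's formula gives $E \le 3V-6$ for simple plane graphs and $E \le 2V-4$ for triangle-free ones, which rules out $K_5$ and $K_{3,3}$, and planarity both passes to subgraphs and is invariant under subdivision. For the hard direction, your route --- reduce to the $3$-connected case via blocks and splitting along $2$-cuts with virtual edges, then induct using the lemma that every $3$-connected graph on at least five vertices has an edge $e$ with $G/e$ still $3$-connected, then split the contracted vertex back apart inside a face of the embedding of $G/e$ --- is exactly Thomassen's proof as presented in standard textbooks (e.g.\ Diestel). You also correctly isolate the two genuinely delicate points: converting a failed vertex-split into an explicit $K_5$ or $K_{3,3}$ structure, and the passage between minors and topological minors, where the maximum-degree-$3$ observation disposes of $K_{3,3}$ and a short case analysis shows a $K_5$ minor always yields a $K_5$ or $K_{3,3}$ subdivision. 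Be aware, though, that these two named steps are where essentially all of the work lives, and your proposal asserts rather than executes them; as it stands it is a correct, well-organized plan rather than a finished proof. For the purposes of this particular paper that is no criticism: the theorem is classical, the authors are right to cite it rather than prove it, and a full write-up of your outline would add several pages that are orthogonal to the paper's actual contribution.
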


Another tool for computing the genus of a graph is the concept of a \emph{graph rotation}. The following definitions are from \cite{hartsfield_ringel}. A \emph{rotation of a vertex} is an ordered cyclic listing of the vertices adjacent to that vertex. A \emph{graph rotation} consists of rotations of each vertex of the graph. This term is not to be confused with the concept of a Euclidean rotation of the plane.

A \emph{circuit} of a graph is a sequence of vertices $v_i$ and edges $E_i$ of the form $v_0E_1v_1E_2v_2\ldots E_nv_n$ such that $v_0=v_n$ and such that for each $i$, $E_i$ connects $v_{i-1}$ to $v_i$. We say that such a circuit has \emph{length} $n$. We may also choose to represent a circuit by listing only the subsequence consisting of the vertices, and omitting the final vertex since it is equal to the first: $v_0\ldots v_{n-1}$.
\begin{remark}
Observe that for any graph $G=Cay(\{a,b,c\},D_n)$ where $a$, $b$, and $c$ represent reflections:

\begin{enumerate}
    \item Every circuit $v_0E_1v_1E_2v_2\ldots E_nv_n$ corresponds to a relation $E_1E_2\ldots E_n=R_0$, where $R_0$ is the identity element of $D_n$.
    \item It follows from the previous observation that since each $E_i$ is a Euclidean reflection and $R_0$ is a Euclidean rotation, circuits in $G$ are always of even length. From a graph theoretic perspective, this is true because $G$ is bipartite: the reflections and Euclidean rotations form the two bipartite sets.
\end{enumerate}
\end{remark}

A graph rotation of a graph $G$ induces a set of circuits on $G$ such that each edge is traveled once in each direction. The circuits are obtained in the following way: the circuit which contains $...v_iv_j$ continues as $...v_iv_jv_k$, where $v_k$ is the vertex directly following $v_i$ in the rotation of $v_j$. For example, consider the following rotation of $K_{3,3}$:
\begin{align*}\\v_0.v_1v_3v_5\\v_1.v_0v_4v_2\\v_2.v_1v_5v_3\\v_3.v_0v_2v_4\\v_4.v_1v_3v_5\\v_5.v_0v_2v_4\\\end{align*}

The notation $w.v_iv_jv_k$ means that the vertex $w$ is adjacent to exactly the vertices $v_i,v_j,v_k$, and that the cyclic listing of these vertices is $v_iv_jv_k$.

The rotation in our example induces three circuits. This includes two circuits of length four:  $v_0v_3v_2v_1$ and $v_1v_2v_5v_4$; and one circuit of length eight: $v_0v_1v_4v_3v_0v_5v_2v_3v_4v_5$. 

Let $r(\rho)$ denote the number of circuits induced by a rotation $\rho$ of a graph $G$. So in the previous example, $r(\rho)=3$. We call $\rho$ a  \emph{maximal rotation} of $G$ if $r(\rho)=\max_{\rho_i}\{r\left(\rho_i\right)\}$ where the $\rho_i$ vary over all possible rotations of $G$.  

The following formula provides the connection between graph genus and graph rotations. It is related to Euler's characteristic formula, with circuits playing the role of the polygonal ``faces'' of the surface.

\begin{theorem}\label{thm:rotation_genus}\cite{hartsfield_ringel}
Let G be a connected graph with $p$ vertices and $q$ edges, and let $\rho$ be a maximal rotation of G. Then the genus of $G$ is $g$, where $p-q+r(\rho)=2-2g$.
\end{theorem}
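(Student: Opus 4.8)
The plan is to derive the formula from the classical correspondence between rotation systems and cellular embeddings of a graph (the Heffter--Edmonds principle), together with the Euler characteristic formula for closed orientable surfaces. First I would show that every rotation $\rho$ of $G$ determines a closed orientable surface $S_\rho$ into which $G$ embeds so that every face is a disk. The construction is to thicken the graph into a ribbon surface: replace each vertex by a disk on whose boundary the incident half-edges appear in the cyclic order prescribed by the rotation of that vertex, and replace each edge by a band joining the corresponding half-edges, attaching the bands without a twist so that the resulting surface is orientable. The boundary curves of this ribbon surface are traced out by exactly the face-tracing rule described just before the statement, so they are in bijection with the circuits induced by $\rho$; there are $r(\rho)$ of them. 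Capping each boundary circle with a disk produces the closed orientable surface $S_\rho$, in which there are $p$ vertices, $q$ edges, and $r(\rho)$ faces. Since $\chi(S_\rho)=2-2g_\rho$ for the genus $g_\rho$ of $S_\rho$, counting cells gives the per-rotation identity $p-q+r(\rho)=2-2g_\rho$.

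It then remains to identify $\min_\rho g_\rho$ with the genus of $G$. For the converse direction I would observe that any $2$-cell embedding of $G$ in an orientable surface induces a rotation at each vertex, obtained by reading off the cyclic order of the edges around that vertex from the ambient orientation, and that the circuits this rotation induces are precisely the face boundaries of the embedding. Thus rotation systems and (equivalence classes of) cellular orientable embeddings correspond to one another. The key fact I would invoke is that a minimum-genus embedding of a connected graph is automatically cellular: if some face were not a disk it would contain a noncontractible simple closed curve, and compressing the surface along that curve would lower the genus while preserving the embedding, contradicting minimality. Consequently the genus of $G$, defined as the least genus of any surface carrying $G$, equals $\min_\rho g_\rho$.

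Finally, solving the per-rotation identity yields $g_\rho = 1 - \tfrac{1}{2}\bigl(p-q+r(\rho)\bigr)$. Because $p$ and $q$ depend only on $G$, this is minimized exactly when $r(\rho)$ is maximized, that is, when $\rho$ is a maximal rotation; substituting such a $\rho$ gives $p-q+r(\rho)=2-2g$ with $g$ the genus of $G$. The step I expect to be the main obstacle is the justification that minimum-genus embeddings are cellular, since it is precisely this fact that licenses replacing the minimum over all surface embeddings by the minimum over all rotation systems; by contrast the ribbon-surface construction and the Euler count are routine once that bridge is established.
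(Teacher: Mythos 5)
The paper offers no proof of this statement: it is quoted directly from Hartsfield and Ringel \cite{hartsfield_ringel} and used as a black box, so there is no in-paper argument to compare yours against. Your proposal reconstructs the standard proof of this classical result, and it is essentially correct: the ribbon-graph (Heffter--Edmonds) construction realizes every rotation $\rho$ as a cellular embedding of $G$ in a closed orientable surface whose faces correspond to the circuits induced by $\rho$, Euler's formula then gives $p-q+r(\rho)=2-2g_\rho$ for each rotation, and the bridge that lets you replace the minimum over all embeddings by the minimum over all rotations is exactly Youngs' theorem that a minimum-genus embedding of a connected graph is cellular --- which you correctly identify as the crux.

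One detail your compression argument glosses over, and which you should spell out to make the sketch complete: a simple closed curve $\gamma$ lying in a non-disk face and not bounding a disk in that face may still \emph{separate} the ambient surface, and cutting along a separating curve does not by itself lower the genus. In that case you must keep only the component containing $G$ --- this is precisely where the hypothesis that $G$ is connected enters --- and then argue that the discarded component has positive genus: if it had genus zero it would be a disk with boundary $\gamma$, and since it is disjoint from $G$ and its boundary lies in the face, that disk would lie inside the face, contradicting the choice of $\gamma$. With the two cases handled (nonseparating: surgery drops the genus by one; separating: the piece containing $G$ has strictly smaller genus), the contradiction with minimality goes through and your outline matches the standard textbook proof of the theorem.
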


As a second example, consider $G(1,3,3)$. See Figure \ref{fig:G(1,3,3)_hex}. Here several vertices and edges are drawn twice; the reader should view identical objects as being identified. We color the ``a'', ``b'', and ``c'' edges red, green, and blue, respectively. To simplify graph labeling, we write $x=ba$. Here, we have that $ca=ab$ and also that $cb=abab$. Note that, since $cba$ is a Euclidean reflection, $cbacba=(cba)^2=e$. So following edges labeled a,b,c,a,b,c in that order will yield a length 6 circuit, no matter where we start in the graph. These circuits are indicated in Figure \ref{fig:G(1,3,3)_hex} by the circular arrows within each hexagon. Observe that each edge in $G(1,3,3)$ is traversed exactly twice by these circuits -- once in each direction.

Next, consider Figure \ref{fig:G(1,3,3)_torus}, wherein we have taken the graph in Figure \ref{fig:G(1,3,3)_hex} and redrawn it slightly so that the outer boundary forms a larger hexagon whose opposite sides are identified. Since it is well-known that identifying opposite sides of a hexagon creates a genus 1 surface, we see that $G(1,3,3)$ has genus at most 1.

\begin{figure}[h]
    \centering
    \includegraphics[scale=.25]{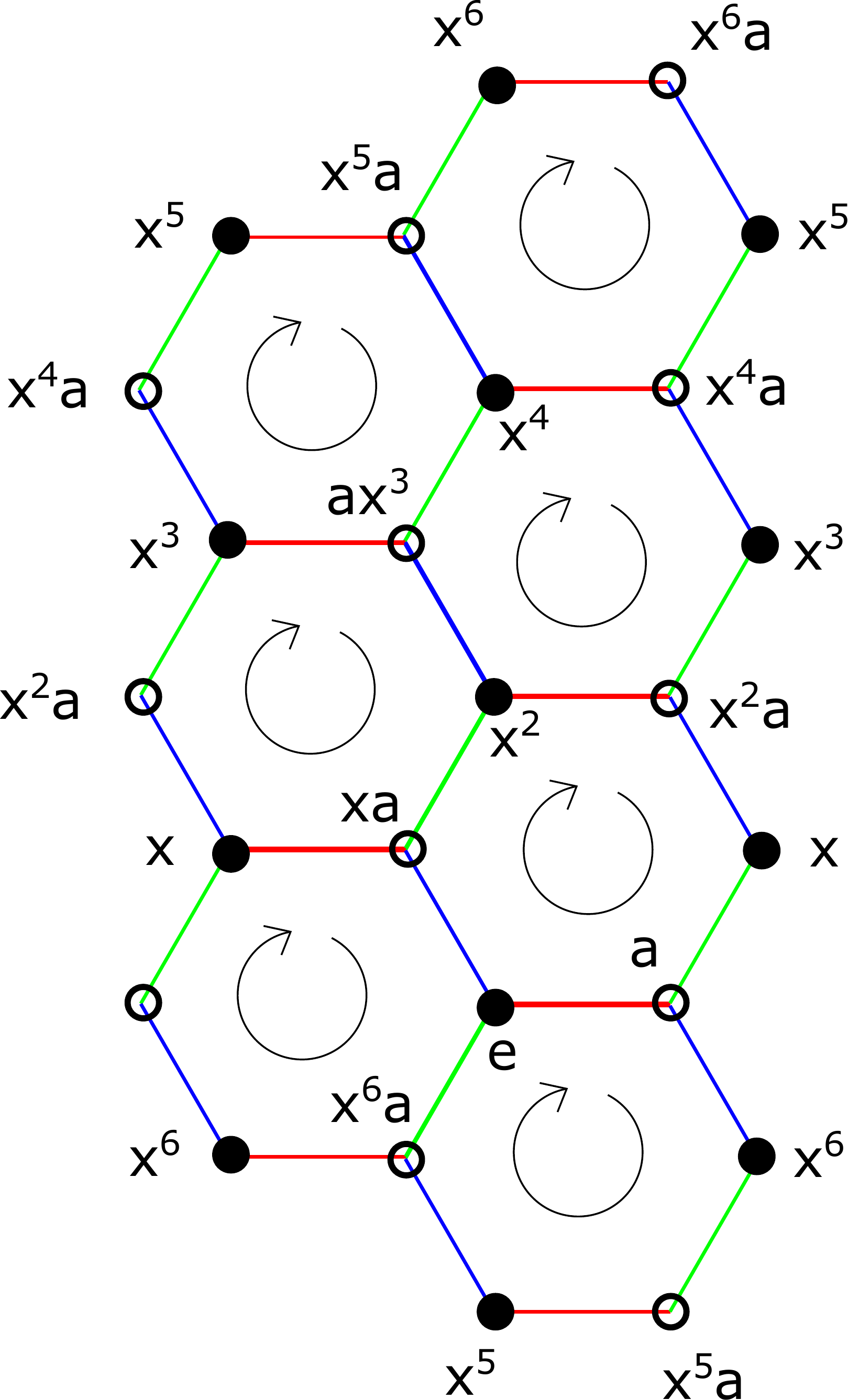}
    \caption{G(1,3,3), with ``a'' ``b'' and ``c'' edges colored red, green and blue respectively. Some edges and vertices are drawn twice. Circuits are indicated with circular arrows. We define $x=ba$.}
    \label{fig:G(1,3,3)_hex}
\end{figure}

\begin{figure}[h]
    \centering
    \includegraphics[scale=.2]{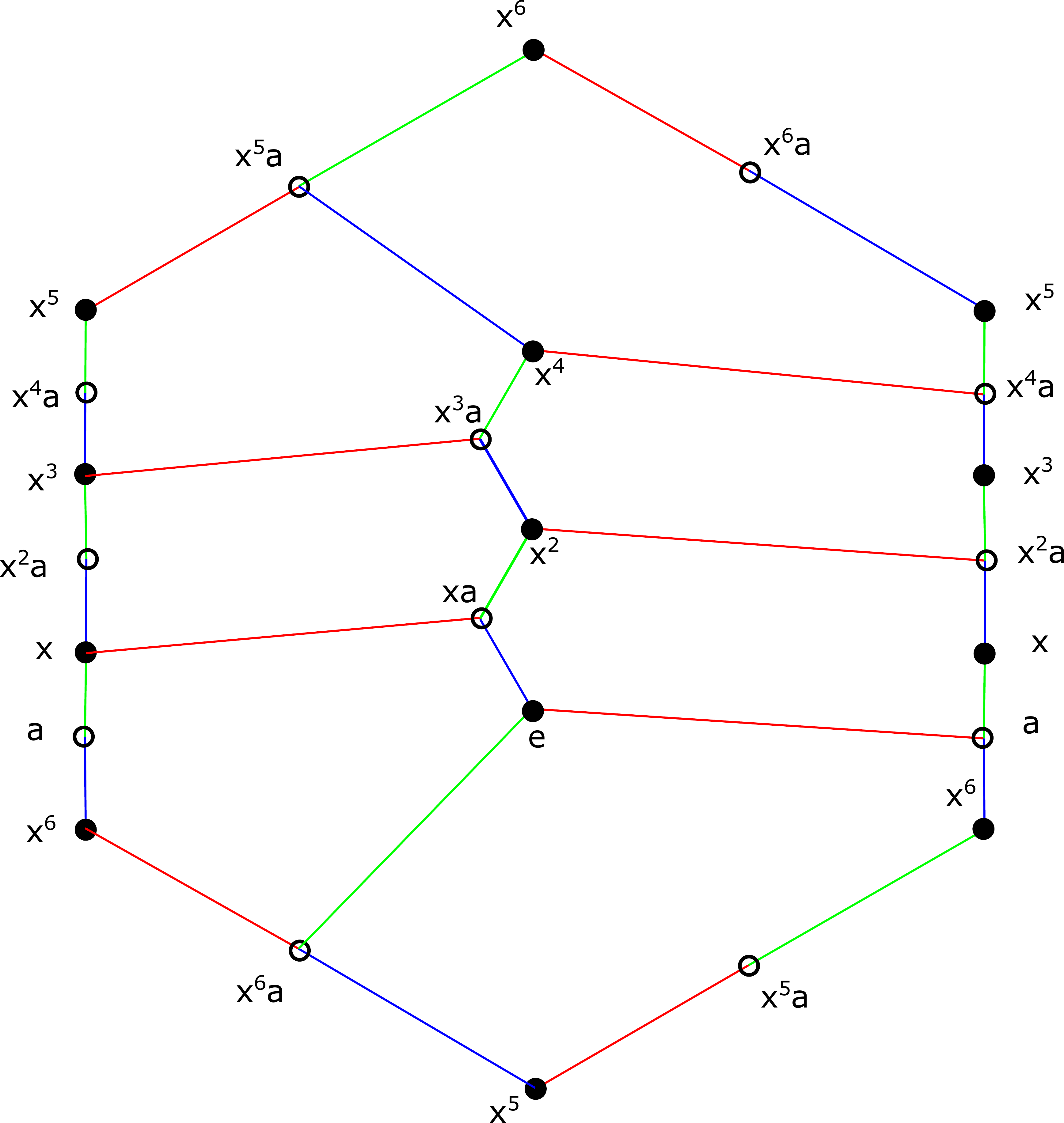}
    \caption{G(1,3,3), drawn on a hexagonal torus.}
    \label{fig:G(1,3,3)_torus}
\end{figure}

\section{Results}

The goal of this section is Theorem \ref{main_theorem}, which states that the genus of $G(p_1,p_2,p_3)$ is always zero or one. This result is in contrast to the fact that the surface $X(p_1,p_2,p_3)$ can have arbitrarily high genus \cite{Aurell_Itzykson}.

We begin by generalizing the last example in the previous section.

\begin{lemma}\label{lemma:max_genus}
The Cayley graph $G=G(p_1,p_2,p_3)$ has genus at most 1.
\end{lemma}

\begin{proof}
Let $n=p_1+p_2+p_3$. Since $D_n$ has $2n$ elements, $G$ has $2n$ vertices. Since $G$ is 3-regular, it follows that $G$ has $\frac{3}{2}(2n)=3n$ edges. Let $\rho$ be a maximal rotation of $G$. By Theorem \ref{thm:rotation_genus}, $2n-3n+r(\rho)=2-2g$, so $g=1+\dfrac{n-r(\rho)}{2}$.

Now consider the graph rotation $\rho_1$ defined on $G$ in the following way. Each vertex $x$ of $G$ is adjacent to three other vertices $v_{x,a},v_{x,b},v_{x,c}$ via edges with labels $a$, $b$, and $c$, respectively. Using this notation, for each $x$, define the rotation $\rho_1$ at $x$ to be $x.v_{x,a}v_{x,b}v_{x,c}$. Then any walk whose first edge is $a$ will begin with the edge string $abcabc$. Since the element $abc\in D_n$ is a reflection, we know that it is its own inverse. Hence $abcabc$ is the identity in $D_n$, from which it follows that $abcabc$ always describes a circuit in the directed graph. Indeed, any directed edge in $G$ is a part of such a circuit. Since $G$ has $6n$ directed edges, it follows that $\rho_1$ induces exactly $r(\rho_1)=\dfrac{6n}{6}=n$ circuits. Thus $g\leq 1+\dfrac{n-r(\rho_1)}{2}=1$. That is, the genus of $G$ is at most 1.
\end{proof}

\begin{figure}[h]
    \centering
    \includegraphics[scale=.15]{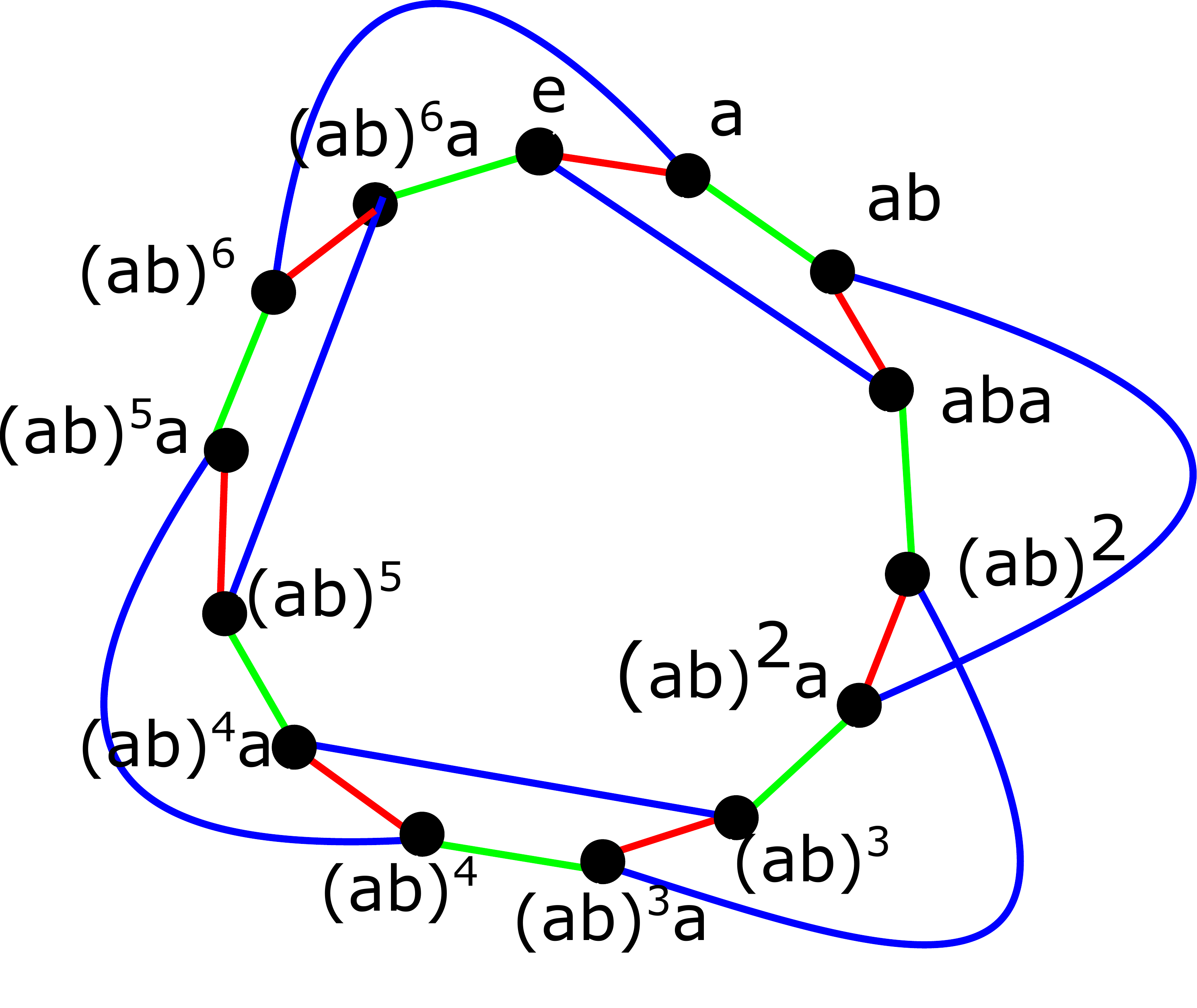}
    \caption{A drawing of $Cay(\{a,b,c\}, D_{7})$.}
    \label{fig:CayD7}
\end{figure}

\begin{figure}[h]
    \centering
    \includegraphics[scale=.15]{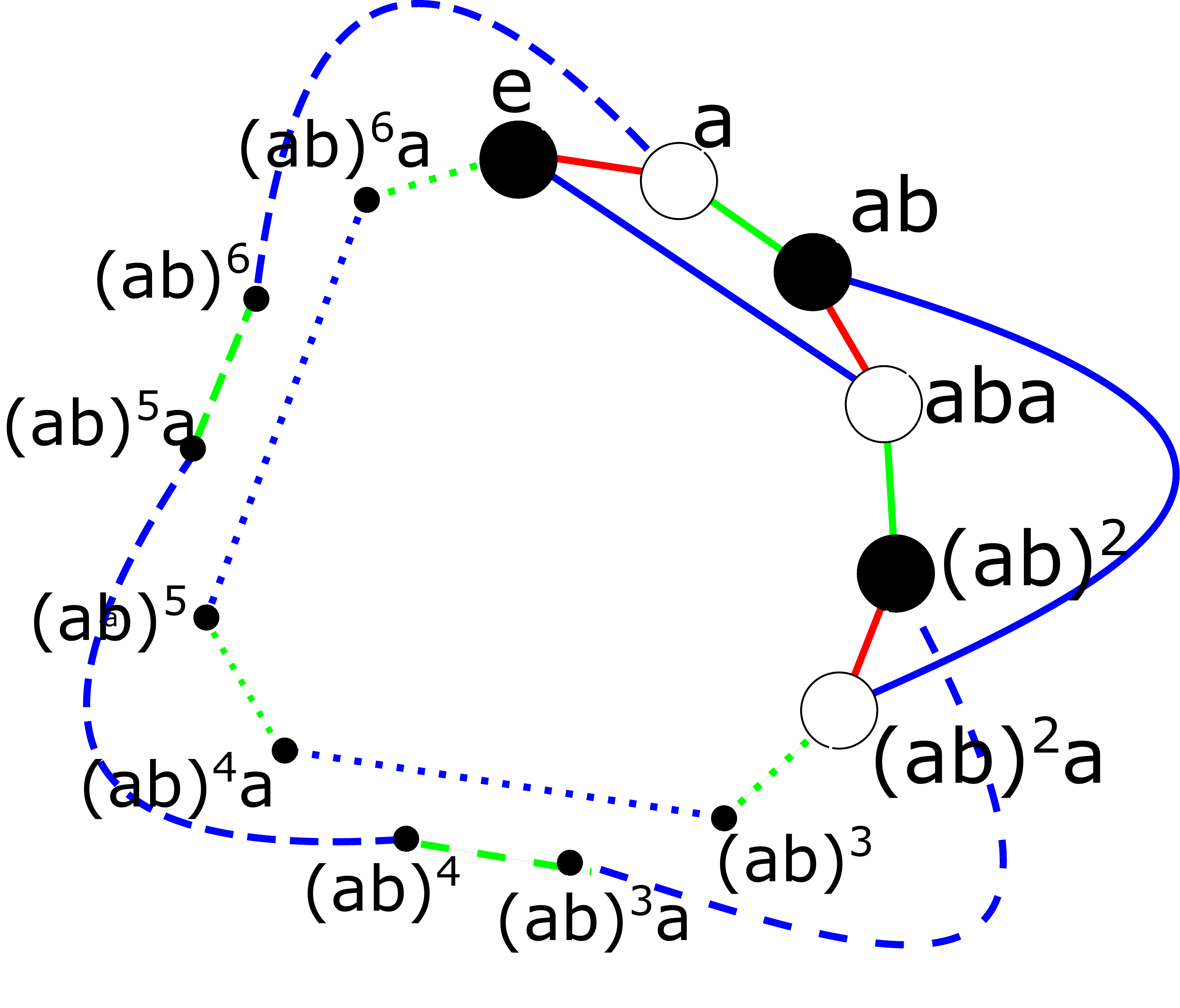}
    \caption{A subgraph of the Cayley graph for an isosceles triangle when $n=7$, pictured as a subdivision of $K_{3,3}$. The dashed edges form a subdivision of a single edge of $K_{3,3}$, as do the dotted edges. }
    \label{fig:K33subdivision}
\end{figure}

\begin{figure}[h]
    \centering
    \includegraphics[scale=.15]{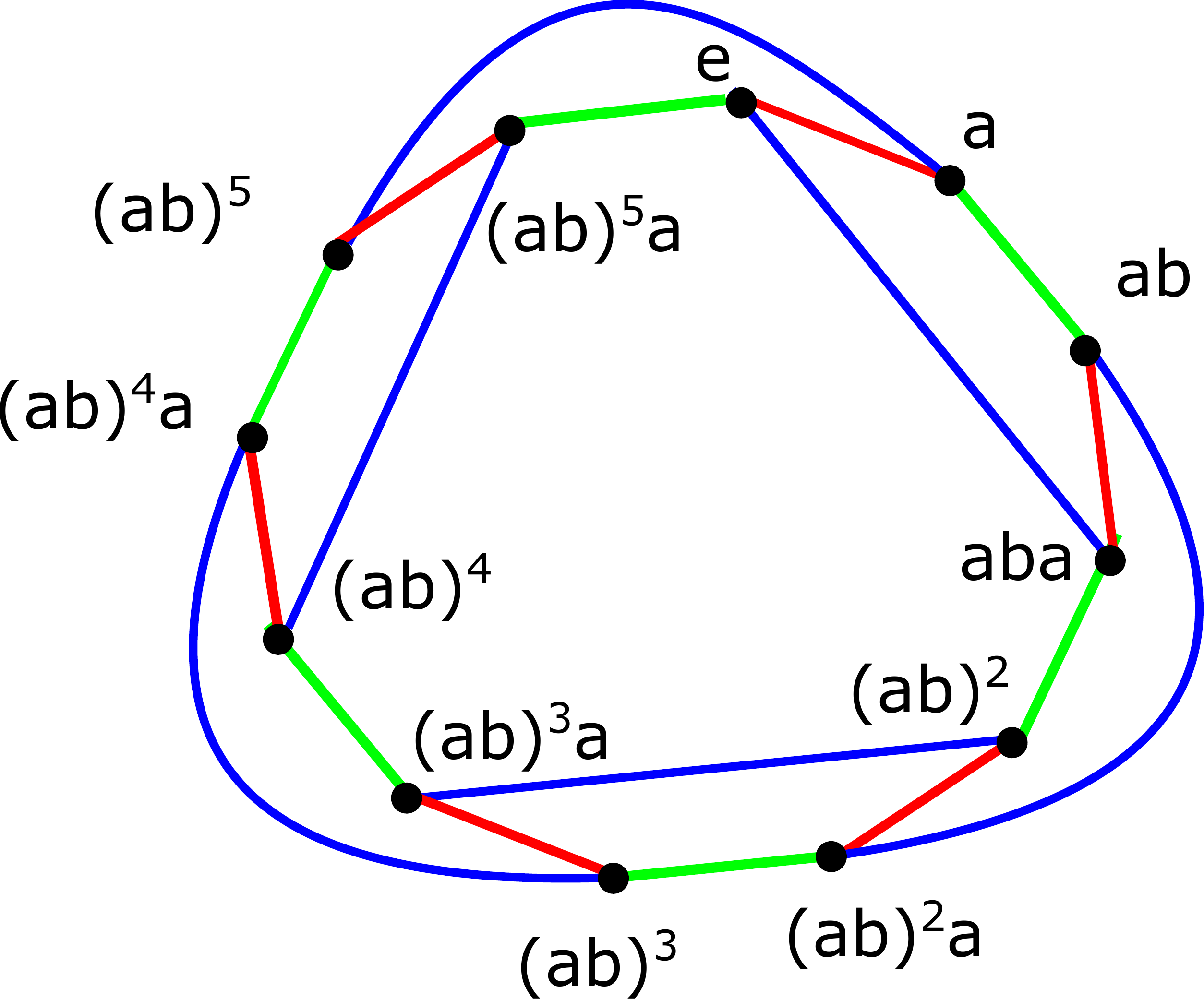}
    \caption{The Cayley graph for an isosceles triangle when $n=6$.}
    \label{fig:CayD6}
\end{figure}

\begin{lemma}\label{lemma:isosceles_four_word}
$G=G(p_1,p_2,p_3)$ has a length four circuit if and only if $T=T(p_1,p_2,p_3)$ is either isosceles or a right triangle.
\end{lemma}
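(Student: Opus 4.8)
The plan is to convert a length-four circuit into a short relation in $D_n$ and then read off the geometric constraint it forces on the angles $\alpha_1,\alpha_2,\alpha_3$. By the first observation following the definition of a circuit, a length-four circuit is a relation $E_1E_2E_3E_4=R_0$ with each $E_i\in\{a,b,c\}$ and $R_0$ the identity. To rule out degenerate (backtracking) circuits I would insist that consecutive labels differ and that $E_4\neq E_1$; a one-line check then shows the four vertices $v_0,v_1,v_2,v_3$ are distinct (for instance $v_0\neq v_2$ reduces to $E_1\neq E_2$, and $v_1\neq v_3$ to $E_4\neq E_1$), so that a genuine length-four circuit is the same thing as a cyclically reduced relation $E_1E_2E_3E_4=R_0$.

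The computational engine is the Remark, by which a product of two distinct generators is a nontrivial rotation: $ab=Rot(2\alpha_3)$, $ac=Rot(-2\alpha_2)$, and, by composing these, $bc=Rot(2\alpha_1)$, together with the inverses $ba,ca,cb$. Rewriting the relation as $E_1E_2=(E_3E_4)^{-1}=E_4E_3$ displays it as an equality between two such rotations, and I would then split on the number of distinct generators appearing in the word.

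For the only-if direction, if just two generators occur the word must alternate, $E_1E_2E_3E_4=(xy)^2$, so the rotation $xy$ has order dividing two; as $xy=Rot(\pm 2\alpha_k)$ this gives $2\alpha_k\equiv\pi\pmod{2\pi}$, i.e.\ $\alpha_k=\pi/2$ and $T$ is right. If all three generators occur, the repeated one must occupy the two non-adjacent positions, and $E_1E_2$, $E_4E_3$ are rotations by $\pm2\alpha_k$ and $\pm2\alpha_m$ with $k\neq m$; their equality forces $2\alpha_k\equiv\pm2\alpha_m\pmod{2\pi}$. Since $0<\alpha_k+\alpha_m<\pi$, the minus sign (which would require $\alpha_k+\alpha_m=\pi$) is impossible, leaving $\alpha_k=\alpha_m$, so $T$ is isosceles. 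As these cases exhaust all cyclically reduced length-four words, this proves one direction.

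For the converse I would exhibit the circuits directly: a right angle $\alpha_k=\pi/2$ makes the corresponding rotation square to $R_0$, yielding an alternating $4$-cycle $(xy)^2$, while an equality of two angles makes a three-generator word collapse, e.g.\ $abac=Rot(2\alpha_3-2\alpha_2)=R_0$ when $\alpha_2=\alpha_3$, and $abcb=Rot(2\alpha_3-2\alpha_1)=R_0$ when $\alpha_1=\alpha_3$; one checks each such word is cyclically reduced. The genuine content here is a single rotation computation, so I expect the only real obstacles to be bookkeeping: pinning down the correct non-backtracking notion of circuit, verifying vertex-distinctness, and making the sign analysis in the three-generator case airtight, in particular the exclusion of $\alpha_k+\alpha_m=\pi$, which is the fussiest but still routine step.
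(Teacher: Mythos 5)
Your proposal is correct and takes essentially the same route as the paper: both translate a length-four circuit into a relation in $D_n$, apply the rotation formulas from the Remark, and split into the two-generator case ($abab$, forcing a right angle) and the three-generator case ($abac$, forcing two equal angles), with the converse proved by exhibiting the same explicit words. The only difference is that you spell out two points the paper absorbs into its ``without loss of generality'' step --- the exclusion of backtracking length-four closed walks and the sign analysis ruling out $\alpha_k+\alpha_m=\pi$ --- which is added care rather than a different argument.
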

\begin{proof}
Suppose that $G(p_1,p_2,p_3)$ has a length four circuit. Then without loss of generality we may say that the circuit is either $abab$ or $abac$, using the notation of Figure \ref{fig:reflections}. If $abab$ is a circuit in $G$ then $abab$ is the identity in $\Gamma=\Gamma(p_1,p_2,p_3)$. Since $ab\in\Gamma$ is the Euclidean rotation by $2\alpha_3$, we then have that $4\alpha_3=2\pi$, so that $\alpha_3=\dfrac{\pi}{2}$, and we see that $T$ is a right triangle.

If $abac$ is a circuit in $G$ then $abac$ is the identity in $\Gamma$. Since $ac\in\Gamma$ is the Euclidean rotation by $-2\alpha_2$, this implies that $2\alpha_3-2\alpha_2=0$; hence $\alpha_2=\alpha_3$ and $T$ is isosceles.

Conversely, suppose that $T(p_1,p_2,p_3)$ is isosceles. Without loss of generality say that $p_2=p_3$. Then we see that $abac$ is the identity in $\Gamma$, so $abac$ is a length 4 circuit in $G$. Finally, suppose instead that $T(p_1,p_2,p_3)$ is a right triangle. Say $\alpha_3=\dfrac{\pi}{2}$. Then it follows that $abab\in\Gamma$ is the identity and hence $abab$ is a circuit in $G$.   
\end{proof}

\begin{lemma}\label{lemma_two_generators}
For isosceles and right triangles, two of the three reflections generate $D_n$. 
\end{lemma}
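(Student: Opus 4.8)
The plan is to translate the group-theoretic statement into an elementary number-theoretic condition on $p_1,p_2,p_3$, using the rotation formulas from the Remark. First I would record the key structural fact: since $a$ is a reflection and, by the Remark, $ab=Rot(2\alpha_3)=Rot(2p_3\pi/n)$ is a rotation, the subgroup $\langle a,b\rangle=\langle a,ab\rangle$ is generated by a reflection together with the rotation $ab$. Writing $r=Rot(2\pi/n)$ for the standard generator of the cyclic rotation subgroup of $D_n$, we have $ab=r^{p_3}$, which has order $n/\gcd(n,p_3)$. A reflection together with a cyclic group of rotations of order $k$ generates a dihedral group of order $2k$, so $|\langle a,b\rangle|=2n/\gcd(n,p_3)$, and therefore $\langle a,b\rangle=D_n$ if and only if $\gcd(n,p_3)=1$. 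The analogous computations, using $ac=Rot(-2\alpha_2)$ and $bc=Rot(2\alpha_1)$ (the latter following from $bc=(ab)^{-1}(ac)$ and $\alpha_1+\alpha_2+\alpha_3=\pi$), give that $\langle a,c\rangle=D_n$ iff $\gcd(n,p_2)=1$ and $\langle b,c\rangle=D_n$ iff $\gcd(n,p_1)=1$. Thus it suffices to show that in each case at least one $p_i$ is coprime to $n$.

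The isosceles case is then immediate. Say $p_2=p_3$, so that $n=p_1+2p_2$ and the standing hypothesis $\gcd(p_1,p_2,p_3)=1$ becomes $\gcd(p_1,p_2)=1$. Then $\gcd(n,p_2)=\gcd(p_1+2p_2,p_2)=\gcd(p_1,p_2)=1$, so $\langle a,c\rangle=D_n$.

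For the right-triangle case I would suppose $\alpha_3=\pi/2$, i.e. $p_3=n/2$, so that $n=2(p_1+p_2)$ is even and $\gcd(p_1,p_2,p_3)=1$ again forces $\gcd(p_1,p_2)=1$. I would then argue that at least one of $p_1,p_2$ is coprime to $n$. Since $\gcd(p_1,p_2)=1$ we also have $\gcd(p_1+p_2,p_i)=\gcd(p_1,p_2)=1$, so the only prime that could divide both $n=2(p_1+p_2)$ and $p_i$ is $2$. Because $p_1$ and $p_2$ are coprime they are not both even, so the odd one — say $p_1$ — satisfies $\gcd(n,p_1)=1$, giving $\langle b,c\rangle=D_n$.

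The routine but essential step is the group-theoretic fact that a reflection and a rotation of order $k$ generate a dihedral group of order exactly $2k$; everything else is a short gcd manipulation. I expect the only real subtlety to be bookkeeping: correctly matching each pair of reflections to the index $p_i$ it controls (via which rotation the product of the pair equals) and handling the parity argument in the right-triangle case, where one must use $\gcd(p_1,p_2)=1$ to rule out $2$ as a common divisor of $n$ and $p_1$.
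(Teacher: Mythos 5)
Your proposal is correct and follows essentially the same route as the paper: reduce the statement to showing some $p_i$ is coprime to $n$ (via the fact that the rotation $ab=Rot(2p_3\pi/n)$ together with a reflection generates $D_n$ exactly when its order is $n$), then verify the coprimality by gcd manipulations in the isosceles and right-triangle cases. Your gcd bookkeeping in the right-triangle case (using $\gcd(p_1+p_2,p_i)=1$ to isolate $2$ as the only possible common prime) is a slightly cleaner packaging of the paper's Euclid's-Lemma argument, but the underlying idea is identical.
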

\begin{proof}
Suppose that $T=T(p_1,p_2,p_3)$, with $n=p_1+P_2+p_3$. We have that $ab=Rot\left(\dfrac{2p_3\pi}{n}\right)$. The order of $ab$ is the smallest positive integer $k$ such that: $$(ab)^k=Rot(0)$$ $$Rot\left(\dfrac{2p_3k\pi}{n}\right)=Rot(0)$$ $$p_3k\equiv 0 \pmod n.$$ This implies that if $\gcd(p_3,n)=1$ then the order of $ab$ is $n$. In turn, if $ab$ has order $n$ then it generates an index 2 subgroup of $D_n$, and it follows that $a$ and $b$ together generate all of $D_n$.

Hence to prove the lemma it suffices to show that one of the $p_i$ is relatively prime to $n$. First suppose that $T$ is isosceles. Without loss of generality say that $p_2=p_3$. Let $k=\gcd(n,p_3)$. Then since $p_1=n-2p_3$ we see that $k$ divides $p_1$. But $\gcd(p_1,p_2,p_3)=1$, so $k=1$. Therefore $p_3$ and $n$ are relatively prime.  

Now suppose instead that $T$ is a right triangle. Without loss of generality, say that $\alpha_2$ is the right angle, so that $p_2=\dfrac{n}{2}$ and $p_1+p_3=p_2$. Note that $p_3$ and $p_1$ cannot both be even, because if they were then $p_2$ would also be even, but $\gcd(p_1,p_2,p_3)=1$. So let $p_3$ be odd and write $k=\gcd(n,p_3)$. Since $p_3$ is odd, $k$ is odd. Note that $2p_1=n-2p_3$, so since $k$ is odd, $k$ divides $p_1$ by Euclid's Lemma. But then $k$ also divides $p_2=p_1+p_3$, and hence $k=1$ since $\gcd(p_1,p_2,p_3)=1$. Thus $p_3$ is relatively prime to $n$. 
\end{proof}

Note that it is not the case for \emph{all} rational triangles that at least one of the $p_i$ is relatively prime to $n$. For example, consider $T(5,9,16)$.

\begin{lemma}\label{lemma:isosceles_genus}
Let $G=G(p_1,p_2,p_3)$ be the Cayley graph for an isosceles triangle with interior angles $\dfrac{p_1\pi}{n},\dfrac{p_2\pi}{n},\dfrac{p_3\pi}{n}$. Then the genus of $G$ is $0$ if and only if $n$ is even.
\end{lemma}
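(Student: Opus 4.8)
The plan is to determine the exact combinatorial type of $G$ and then apply Theorem~\ref{thm:rotation_genus}, using Kuratowski's Theorem to supply the genus lower bound in the odd case. Throughout I assume without loss of generality that $p_2=p_3$, so that $\alpha_2=\alpha_3$. First I would describe $G$ explicitly: by Lemma~\ref{lemma_two_generators} the reflections $a,b$ already generate $D_n$, and $ab=Rot(2\alpha_3)$ has order $n$; since the $a$- and $b$-edges make every vertex $2$-valent and $\langle a,b\rangle=D_n$, they form a single Hamiltonian cycle of length $2n$, whose vertices I label $0,1,\dots,2n-1$ in cyclic order. The isosceles hypothesis makes $abac$ the identity (this is the computation in the proof of Lemma~\ref{lemma:isosceles_four_word}), equivalently $c=aba$, so the $c$-edges form a perfect matching; a short calculation with $c=(ab)a$ shows this matching joins vertex $2i$ to vertex $2i+3 \pmod{2n}$. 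Thus $G$ is a cubic graph on $p=2n$ vertices with $q=3n$ edges, consisting of a $2n$-cycle together with all chords of the form $\{2i,2i+3\}$.

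Next I would identify this graph up to isomorphism. Grouping the cycle into $n$ blocks $V_i=\{2i,2i+1\}$ (joined internally by the ``rung'' $\{2i,2i+1\}$) exhibits exactly two edges between consecutive blocks, one cycle edge and one chord, and these cross: the top vertex $2i$ of $V_i$ joins the bottom vertex of $V_{i+1}$ while the bottom vertex $2i+1$ joins the top of $V_{i+1}$. Composing these $n$ swaps around the block cycle, the two ``rails'' close up into two disjoint $n$-cycles when $n$ is even and into a single $2n$-cycle when $n$ is odd. Hence $G$ is the circular ladder (prism) on $2n$ vertices when $n$ is even and the M\"obius ladder on $2n$ vertices when $n$ is odd; as a check, $n=3$ gives the M\"obius ladder $M_3=K_{3,3}$, matching Figure~\ref{fig:K33}, and $n=6$ gives the prism of Figure~\ref{fig:CayD6}.

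For even $n$ the prism is planar, so $g=0$: concretely, the rotation whose faces are the two $n$-gon rails together with the $n$ quadrilaterals $abac$ has $r(\rho)=n+2$, and Theorem~\ref{thm:rotation_genus} gives $2-2g = 2n-3n+(n+2)=2$, i.e. $g=0$. For odd $n$ I must show $g=1$. For the upper bound I would exhibit the analogous rotation on the M\"obius ladder, count $r(\rho)=n$ induced circuits, and conclude from $2n-3n+n=0$ that this rotation realizes $G$ on the torus, so $g\le 1$. For the lower bound I would show $G$ is nonplanar by displaying a subdivision of $K_{3,3}$ inside the M\"obius ladder (the case $n=7$ is drawn in Figure~\ref{fig:K33subdivision}), with three rungs furnishing one pair of branch vertices and the two arcs of the rail cycle furnishing the subdivided edges, and then invoke Kuratowski's Theorem to get $g\ge 1$. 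Together these give $g=1$.

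The main obstacle is the bookkeeping that converts the algebraic matching $2i\sim 2i+3$ into the prism/M\"obius dichotomy: getting the parity of the swaps right is exactly what pins the answer on the parity of $n$. The second delicate point is the pair of face counts---I need an explicit rotation whose induced circuits number \emph{exactly} $n+2$ (resp. $n$), not merely a bound---and I need the $K_{3,3}$ subdivision to be described uniformly for every odd $n$ rather than only in the illustrated case.
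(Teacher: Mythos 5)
Your outline shares its backbone with the paper's proof: Lemma \ref{lemma_two_generators} supplies the Hamiltonian cycle on the $\{a,b\}$-edges, the isosceles relation $c=aba$ places the $c$-edges as the matching $2i\sim 2i+3$, a $K_{3,3}$ subdivision plus Kuratowski handles odd $n$, and an explicit planar structure handles even $n$. What you add is the identification of $G$ as the prism (even $n$) or M\"obius ladder (odd $n$); the paper never names these graphs, but its even-$n$ drawing (Figure \ref{fig:CayD6}, with $c$-edges alternately inside and outside the cycle) is exactly your prism embedding, your two rail faces being its inner and outer faces, and its odd-$n$ subdivision (Figure \ref{fig:K33subdivision}) plays the role of your three-rungs-plus-rail subdivision. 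One genuine structural difference: you prove the upper bound $g\le 1$ inside the lemma, whereas the paper's proof of this lemma establishes only nonplanarity for odd $n$ and leaves $g\le 1$ to the rotation argument in Theorem \ref{main_theorem}. Your version is therefore more self-contained, and the parity-of-swaps dichotomy is a clean way to organize both cases.

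However, the odd-$n$ upper bound fails as you describe it: there is no rotation of the M\"obius ladder ``analogous'' to your prism rotation. If a rotation induced the $n$ quadrilaterals $abac$ as circuits, then the two quadrilaterals sharing a given rung ($a$-edge) would have to traverse it in opposite directions, forcing alternating orientations around the cyclic sequence $Q_0,Q_1,\dots,Q_{n-1}$ --- impossible when $n$ is odd. Equivalently, the analogous face list is the $n$ quadrilaterals plus the \emph{single} rail, i.e.\ $r=n+1$ (not $n$, so your stated count is already inconsistent with it), and that gives $p-q+r=2n-3n+(n+1)=1$, an odd Euler characteristic; rotations produce only orientable embeddings with $\chi=2-2g$, so this face set (which is in fact a projective-planar, hence non-orientable, embedding of the M\"obius ladder) can never arise from a rotation. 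The repair is the rotation the paper uses in Theorem \ref{main_theorem}: give every vertex the cyclic order $(v_{x,a},v_{x,b},v_{x,c})$; since $abc$ is a reflection, $(abc)^2=e$, and this rotation induces exactly $n$ hexagonal circuits $abcabc$, whence $2n-3n+n=0$ and $g\le 1$ by Theorem \ref{thm:rotation_genus}. (Alternatively, simply defer the upper bound to Theorem \ref{main_theorem}, as the paper does.) The rest of your plan --- the matching computation, the prism/M\"obius dichotomy, the prism face count $n+2$, and the Kuratowski argument --- is correct, though you should also fix the phrasing of the subdivision: the six endpoints of the three rungs are the branch vertices, and the six rail arcs between them (not two) are the subdivided edges.
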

\begin{proof}
Suppose that $n$ is odd. We will show that the corresponding graph is not planar. Without loss of generality, we label our isosceles triangle as in Figure \ref{fig:reflections} so that $\alpha_2=\alpha_3$. Then $c=aba$ and we have a Cayley graph such as the one depicted in Figure \ref{fig:CayD7}. Note that Lemma \ref{lemma_two_generators} guarantees that the subgraph $Cay(\{a,b\},D_n)$ consists of a single cycle. We choose two sets of three vertices: $V_1=\{e,ba,(ba)^2\}$ and $V_2=\{a,aba,a(ba)^2\}$ to be our bipartite sets. We already have edges connecting $ba$ to each element of $V_2$; we also have edges connecting $e$ to $a$ and $aba$; and we have edges connecting $(ba)^2$ to $aba$ and $a(ba)^2$. This leaves two edges to complete a subdivision of $K_{3,3}$. As exemplified in Figure $\ref{fig:K33subdivision}$, we can connect $a$ to $(ba)^2$ via the sequence $a,(ba)^{n-1},a(ba)^{n-2},(ba)^{n-3},a(ba)^{n-4},\ldots,a(ba)^3,(ba)^2$. Similarly, we can connect $e$ to $a(ba)^2$ via the sequence $e,a(ba)^{n-1},a(ba)^{n-2},a(ba)^{n-3},(ba)^{n-4},\ldots,(ba)^3,a(ba)^2$. Hence by Kuratowski's Theorem, the graph is not planar; that is, its genus is not zero.

Now suppose that $n$ is even. We will show that the graph is planar. Since $a$ and $b$ generate $D_n$, we can draw a loop in the plane with all vertices of $D_n$ on it, using all the $a$ and $b$ edges. It remains to draw all the $c$ edges without creating any edge crossing. As exemplified in Figure \ref{fig:CayD6}, this can be done by drawing all the $c$ edges connecting $(ba)^{2k}$ to $a(ba)^{2k+1}$ inside the loop and drawing all the $c$ edges connecting $(ba)^{2k+1}$ to $a(ba)^{2k+2}$ outside the loop. Thus, the graph is planar; that is, its genus is zero.

\end{proof}
\begin{lemma}
If $T(p_1,p_2,p_3)$ is a right triangle then $G=G(p_1,p_2,p_3)$ is not planar.
\label{lemma:right_nonplanar}
\end{lemma}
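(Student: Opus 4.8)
The plan is to realize $G$ as a single Hamiltonian $2n$-cycle decorated with a perfect matching of ``chords,'' and then to exhibit three pairwise-interleaving chords that, together with arcs of the cycle, form a subdivision of $K_{3,3}$; non-planarity then follows from Kuratowski's Theorem. First I would fix the labeling so that $\alpha_2=\pi/2$, which by the proof of Lemma~\ref{lemma_two_generators} we may do with $\gcd(p_3,n)=1$; then $ab=Rot(2\alpha_3)$ has order $n$, so $\{a,b\}$ generates $D_n$ and, exactly as in the proof of Lemma~\ref{lemma:isosceles_genus}, the subgraph $Cay(\{a,b\},D_n)$ is a single cycle $H$ through all $2n$ vertices. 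Since $\alpha_2=\pi/2$, the element $z:=ac=Rot(-\pi)=Rot(\pi)$ is the central involution of $D_n$ (note $n=2p_2$ is even), so $c=az=za$. This identity is what makes the $c$-edges tractable: the $c$-neighbor of a vertex $x$ is $cx=z(ax)$, i.e.\ it is obtained from the $a$-neighbor of $x$ by the central half-turn $z=r^{n/2}$, where $r=ab$.

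The central step is to locate the $c$-edges along $H$. Writing the vertices of $H$ in cyclic order as positions $0,1,\dots,2n-1$, with even position $2j$ equal to $(ba)^j$ and odd position $2j+1$ equal to $a(ba)^j$, a direct computation using $c=ar^{n/2}$ and $r^{m}a=ar^{-m}$ shows that the $c$-edge incident to position $2j$ joins it to position $2j+n+1 \pmod{2n}$; thus the $c$-edges are $n$ chords of $H$, each spanning just over half the cycle. I would then check the interleaving pattern of these chords: the chord from position $0$ to $n+1$, the chord from $2$ to $n+3$, and the chord from $4$ to $n+5$ have six distinct endpoints occurring in the cyclic order $0,2,4,n+1,n+3,n+5$ whenever $n\ge 6$, and each chord joins a point of $\{0,2,4\}$ to the diametrically following point of $\{n+1,n+3,n+5\}$. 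Hence these three chords pairwise cross, and together with the six arcs of $H$ between consecutive endpoints they form a subdivision of the M\"obius ladder $M_6=K_{3,3}$. By Kuratowski's Theorem, $G$ is not planar.

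The hard part is the bookkeeping of the previous paragraph --- tracking the $c$-edges as chords and verifying the interleaving --- together with the boundary case $n=4$. A conceptual way to organize the crossing computation, which I would use to double-check, is the classical criterion that a cycle-plus-chords graph is planar if and only if the interlacement graph of its chords is bipartite: here one finds that chord $j$ crosses chord $k$ exactly when $k\ne j,\,j+n/2$, so the interlacement graph is $K_n$ minus a perfect matching, which contains a triangle (hence is non-bipartite) precisely when $n\ge 6$. This also pinpoints the one place where care is needed: the only right triangle with $n=4$ is the right isosceles triangle $T(1,1,2)$, whose graph is the planar cube $Q_3\cong K_{4,4}$ minus a perfect matching and is already covered by Lemma~\ref{lemma:isosceles_genus}. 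Thus the argument above, and the conclusion, apply to every right triangle with $n\ge 6$, i.e.\ to every non-isosceles right triangle.
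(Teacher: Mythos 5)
Your proposal is correct and takes essentially the same route as the paper's proof: Lemma \ref{lemma_two_generators} makes $Cay(\{a,b\},D_n)$ a Hamiltonian $2n$-cycle, the $c$-edges are identified as near-diametral chords (your computation that the chord at position $2j$ ends at position $2j+n+1$ is exactly the paper's identity $c=a(ba)^{n/2}$, derived instead via the central involution $ac=Rot(\pi)$), and three pairwise-crossing chords together with arcs of the cycle form a subdivision of $K_{3,3}$, so Kuratowski applies; the paper exhibits this with the branch sets $V_1=\{e,(ba)^2,(ba)^{n/2+1}\}$, $V_2=\{aba,a(ba)^{n/2},a(ba)^{n/2+2}\}$ and a figure, while you use the chords at positions $0,2,4$ plus an algebraic interleaving check, which is a difference of bookkeeping rather than of method. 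The one substantive divergence is the case $n=4$, and there your version is the accurate one: the only right triangle with $n=4$ is the isosceles $T(1,1,2)$, whose graph is the planar cube, so the lemma's conclusion genuinely fails for it and the statement must be restricted to non-isosceles right triangles (equivalently $n\ge 6$), exactly as you do. The paper instead asserts that for $n=4$ ``the claim follows from Lemma \ref{lemma:isosceles_genus},'' but that lemma gives genus $0$ (planarity) for even $n$ --- the opposite of the claim; your restriction is the correction the paper needs, and it suffices for the proof of Theorem \ref{main_theorem}, where the isosceles right triangle is handled by the isosceles case.
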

\begin{proof}
As in the previous lemma we will demonstrate a subgraph of $G=G(p_1,p_2,p_3)$ isomorphic to a subdivision of $K_{3,3}$. Since $T$ is a right triangle, let $\alpha_3=\dfrac{\pi}{2}$. Thus $p_3=p_1+p_2$, and so $n=p_1+p_2+p_3=2(p_1+p_2)$ is even. Since $\gcd(p_1,p_2,p_3)=1$, at least one of $p_1$ and $p_2$ is odd. Hence we may let $p_1$ be odd. It follows that $\gcd(p_1,n)=1$. Therefore $k=\dfrac{n}{2}$ is the smallest positive integer satisfying 
$$2kp_1\equiv 0 \pmod n$$
$$2kp_1\equiv n \pmod n$$
$$kp_1\equiv \dfrac{n}{2} \pmod n$$
$$Ref\left(\dfrac{kp_1\pi}{n}\right)=Ref\left(\dfrac{(n/2)\pi}{n}\right)$$
$$c=a(ba)^k$$

Again, Lemma \ref{lemma_two_generators} guarantees that the subgraph $Cay(\{a,b\},D_n)$ consists of a single cycle. Consider Figure \ref{fig:right_subdivision}, which demonstrates that for $n>4$ the sets $V_1=\{e,(ba)^2,(ba)^{n/2+1}\}$ and $V_2=\{aba,a(ba)^{n/2},a(ba)^{n/2+2}\}$ can be used as the bipartite sets for a subgraph of$G$ that is a subdivision of $K_{3,3}$. This diagram suffices for $n>4$; if $n=4$ then $T=T(2,1,1)$ is isosceles so the claim follows from Lemma \ref{lemma:isosceles_genus}.
\end{proof}

\begin{figure}[h]
    \centering
    \includegraphics[scale=.4]{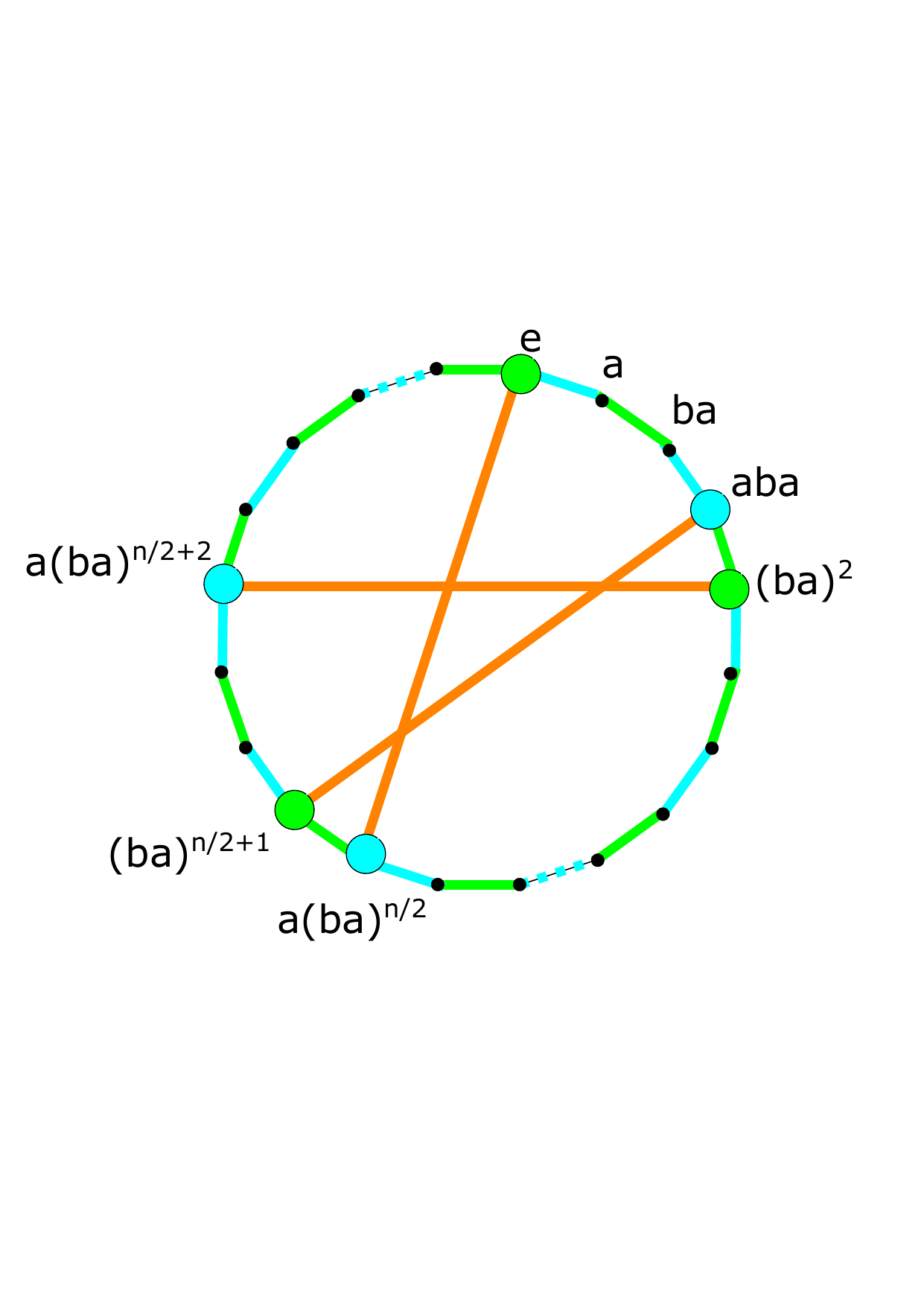}
    \caption{A subgraph of the Cayley graph for a right triangle which is isomorphic to a subdivision of $K_{3,3}$. The three blue vertices form a bipartite set; the three green blue vertices form the other.}
    \label{fig:right_subdivision}
\end{figure}

\begin{theorem}\label{main_theorem}
The genus of $G=G(p_1,p_2,p_3)$ is always 0 or 1. In particular, the genus is zero if and only if $T(p_1,p_2,p_3)$ is isosceles and $n=p_1+p_2+p_3$ is even.
\end{theorem}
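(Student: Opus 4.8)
The plan is to separate the statement into an upper bound that holds for \emph{every} triangle (genus $\le 1$) and a determination of the exact genus by cases. The lemmas already in hand supply most of the lower-bound information: Lemma \ref{lemma:isosceles_genus} settles the isosceles case completely, and the right-triangle lemma settles the scalene right case (the one right triangle that is also isosceles, $T(1,1,2)$ with $n=4$, is absorbed into the isosceles family, where it is correctly classified as genus $0$). What is missing is (i) a uniform upper bound genus $\le 1$, and (ii) nonplanarity for scalene, non-right triangles. I would prove (i) first, since it is the ingredient all remaining cases share.

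For the upper bound I would exhibit an explicit torus embedding using the rotation machinery behind Theorem \ref{thm:rotation_genus}. Define the rotation $\rho$ that at every vertex $x$ lists its three neighbours in the cyclic order $(ax,bx,cx)$. Tracing the induced circuits, the face through the $a$-edge at $x$ reads
$$x,\ ax,\ bax,\ cbax,\ acbax,\ bacbax,$$
that is, it repeatedly prepends $a,b,c$; it returns to $x$ after exactly six steps precisely because $cba$, being a product of three reflections, is itself a reflection and hence satisfies $(cba)^2=R_0$. Thus every face is a hexagon, and since $G$ has $p=2n$ vertices and $q=3n$ edges, the $6n$ directed edges are partitioned into $r(\rho)=n$ hexagonal faces. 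The relation $p-q+r(\rho)=2-2g$ then gives $g=1$ for this embedding, so $\rho$ realizes $G$ on a surface of genus $1$ and the graph genus of $G$ is at most $1$ for every rational triangle. (These hexagons are exactly the doubled Fagnano circuits $abcabc$ mentioned earlier.)

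With the upper bound in place I would finish by cases. If $T$ is isosceles, Lemma \ref{lemma:isosceles_genus} gives genus $0$ when $n$ is even and $1$ when $n$ is odd. If $T$ is scalene and right, the right-triangle lemma shows $G$ is nonplanar (such a $T$ has $n\ge 6$, so the lemma applies), whence by the upper bound its genus equals $1$. If $T$ is scalene and non-right, then by Lemma \ref{lemma:isosceles_four_word} $G$ has no circuit of length four; since $G$ is bipartite, its girth is therefore at least $6$. As $G$ is simple, connected, and $3$-regular, the girth form of Euler's inequality for planar graphs, $q\le \tfrac{6}{6-2}(p-2)=3n-3$, is violated by $q=3n$; hence $G$ is nonplanar and, with the upper bound, has genus $1$. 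Assembling the cases, the genus is $0$ exactly when $T$ is isosceles with $n$ even, and is $1$ otherwise.

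The main obstacle is step (i): verifying that the uniform rotation really produces $n$ hexagonal faces rather than a few longer circuits. The crux is the identity $(cba)^2=R_0$ together with a careful application of the face-tracing rule, confirming that the boundary walk closes up in exactly six edges and no fewer (which would force the impossible $cba=R_0$); once this is checked, the face count and hence the torus embedding are immediate. The remaining cases are short, the only delicate bookkeeping being to classify the isosceles right triangle $T(1,1,2)$ with the isosceles family, so that the ``only if'' direction of the genus-zero characterization is not contradicted.
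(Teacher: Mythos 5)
Your proposal is correct, and its first half coincides exactly with the paper's proof: the paper uses the same uniform rotation (listing the neighbours $ax,bx,cx$ at every vertex $x$), observes that every induced circuit traces the word $abcabc$, which is the identity because $abc$ is a reflection, counts $r(\rho_1)=\frac{6n}{6}=n$ circuits among the $6n$ directed edges, and concludes $g\le 1$ from Theorem \ref{thm:rotation_genus}. Where you genuinely diverge is the genus-zero characterization. The paper stays inside its rotation framework: if $g=0$, a maximal rotation must induce $n+2$ circuits, and since $6n$ directed edges support at most $n$ circuits when all have length $\ge 6$, some induced circuit has length $4$; Lemma \ref{lemma:isosceles_four_word} then constrains $T$, and Lemma \ref{lemma:isosceles_genus} finishes. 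You instead split into cases and, for scalene non-right triangles, replace the rotation count by the standard girth form of Euler's inequality for planar simple graphs: girth $\ge 6$ (bipartiteness plus Lemma \ref{lemma:isosceles_four_word}) forces $q\le\frac{6}{4}(p-2)=3n-3$, contradicting $q=3n$. Both arguments are Euler-characteristic counts, so neither is deeper, but yours invokes a citable textbook inequality while the paper's is self-contained in its rotation machinery. Your explicit case analysis is also somewhat more careful than the paper's write-up at two points: the paper's last paragraph passes from ``$G$ has a length-four circuit'' directly to ``$T$ is isosceles,'' although Lemma \ref{lemma:isosceles_four_word} also permits right triangles, so the unnumbered right-triangle lemma is needed there implicitly, exactly as you deploy it; and your decision to quarantine $T(1,1,2)$ (isosceles right, $n=4$) within the isosceles family is the correct reading, since the right-triangle lemma taken literally would contradict Lemma \ref{lemma:isosceles_genus} for that one triangle, whereas restricted to scalene right triangles (which force $n\ge 6$) it is sound and combines with your upper bound to give genus exactly $1$.
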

\begin{proof}
By Lemma \ref{lemma:max_genus}, the genus $g$ of $G(p_1,p_2,p_3)$ is at most 1. Next we determine when $g=0$. Suppose that $g=0$, and that $\rho$ is a maximal rotation of $G$. Since all circuits of $G$ have even length of at least 4, it follows that $\rho$ must induce a circuit of length 4. By Lemma \ref{lemma:isosceles_four_word}, this can only occur if $T(p_1,p_2,p_3)$ is right or isosceles. But by Lemma \ref{lemma:right_nonplanar}, a genus 0 graph cannot arise from a right triangle. Lemma \ref{lemma:isosceles_genus} finishes the proof.
\end{proof}

We observe that Theorem \ref{main_theorem} easily extends to any polygon whose sides have no more than 3 distinct slopes.

\begin{corollary}
Let $P$ be a rational-angled polygon in the plane and let $S$ be the set of slopes (possibly including $\infty$) of $P$. If $|S|\leq 3$ then the Cayley graph arising from $P$ will have genus 0 or 1.
\end{corollary}
\begin{proof}
The Cayley graph $G$ arising from $P$ is actually $m$-regular, where $m=|S|$. If $m=3$ then $G$ is isomorphic to the graph of some triangle with the same set of slopes; hence Theorem \ref{main_theorem} applies. If $m=2$ then $G$ is a connected 2-regular graph and hence a cycle, which has genus 0. 
\end{proof}

So, for example, all trapezoids will yield graphs of genus 0 or 1, as will all ``L-shaped'' tables.
\section{Future Work}
It would be interesting to extend the results of this paper to all rational-angled polygons. One easily computes from Theorem \ref{thm:rotation_genus} that polygons with more than three edge slopes will correspond to graphs which have genus at least 2; however, we do not have an exact formula in the general case. 

\medskip

\printbibliography[
heading=bibintoc,
title={Bibliography}
] 
\end{document}